\newtheorem{theorem}{Theorem}
\newtheorem{lemma}[theorem]{Lemma}
\newtheorem{definition}{Definition}
\newtheorem{problem}{Problem}
\newcommand{\ds}{\displaystyle}
\newcommand{\enma}[1]   {\ensuremath{#1}}
\newcommand{\beq}{\begin{equation}}
\newcommand{\eeq}{\end{equation}}
\newcommand{\bseq}{\begin{subequations}}
\newcommand{\eseq}{\end{subequations}}
\newcommand{\beqn}{\begin{eqnarray}}
\newcommand{\eeqn}{\end{eqnarray}}
\newcommand{\ba}{\begin{array}}
\newcommand{\ea}{\end{array}}
\newcommand{\bct}{\begin{center}}
\newcommand{\ect}{\end{center}}
\newcommand{\btmz}{\begin{itemize}}
\newcommand{\etmz}{\end{itemize}}
\newcommand{\benum}{\begin{enumerate}}
\newcommand{\eenum}{\end{enumerate}}
\newcommand{\mc}{\mathcal}
\newcommand{\R}{{\mathbb R}}
\newcommand{\Z}{{\mathbb Z}}
\newcommand{\blockdiag} {\enma{\mathrm{blockdiag}}}
\newcommand{\be}{\begin{equation}}
\newcommand{\ee}{\end{equation}}
\newcommand{\cplxs}{ C\kern -.35em \rule{0.03 em}{.7 ex}~   }
\def\complex{\hbox{C\kern -.45em \rule{0.03 em}{1.5 ex}}~}
\newcommand{\bi}{\begin{itemize}}
\newcommand{\ei}{\end{itemize}}
\newtheorem{assumption}{Assumption}
\newcommand{\vsp}{\vspace{0.1cm}}
\begin{document}

\title{\LARGE \bf Mutually Quadratically Invariant Information Structures in \\Two-Team Stochastic Dynamic Games}
\author{Marcello Colombino$^\dag$, Roy S.\ Smith$^\dag$, and  Tyler H.\ Summers$^\ddag$%
\thanks{$^\dag$M. Colombino and R. Smith are with the Automatic Control Laboratory, ETH Zurich, Switzerland. $^\ddag$T. Summers is with the Department of Mechanical Engineering, University of Texas at Dallas, USA.
E-mail addresses: \{\texttt{mcolombi}, \texttt{rsmith}\}\texttt{@control.ee.ethz.ch}, \texttt{tyler.summers@utdallas.edu}. This research is supported by the National Science Foundation under grant CNS-1566127 and partially supported by the Swiss National Science Foundation grant 2--773337--12.}%
}

\maketitle         
\thispagestyle{empty}
\pagestyle{empty}

%%%%%%%%%%%%%%%%%%%%%%%%%%%%%%%%%%%%%%%%%%%%%%%%%%%%%%%%%%%%%%%%%%%%%%%%%%%%%%%%

\begin{abstract}
We formulate a two-team linear quadratic stochastic dynamic game featuring two opposing teams each with decentralized information structures. We introduce the concept of mutual quadratic invariance (MQI), which, analogously to quadratic invariance in (single team) decentralized control, defines a class of interacting information structures for the two teams under which optimal linear feedback control strategies are easy to compute. We show that, for zero-sum two-team dynamic games, structured state feedback Nash (saddle-point) equilibrium strategies can be computed from equivalent structured disturbance feedforward saddle point equilibrium strategies. However, for nonzero-sum games we show via a counterexample that a similar equivalence fails to hold. The results are illustrated with a simple yet rich numerical example that illustrates the importance of the information structure for dynamic games. 
\end{abstract}
\section{introduction}

Future cyber-physical systems (CPS) will feature cooperative networks of autonomous decision making agents equipped with embedded sensing, computation, communication, and actuation capabilities. These capabilities promise to significantly enhance performance, but also render the network vulnerable by increasing the number of access and influence points available to attackers. ``Red team-blue team'' scenarios, in which a defending team seeks to operate the network efficiently and securely while the attacking team seeks to disrupt network operation, have been used to qualitatively assess and improve security in military and intelligence organizations, but have not received formal mathematical analysis in a CPS context. Here we will study some fundamental properties in two-team stochastic dynamic games in cyber-physical networks, with a focus on interactions of the information structures of each team.

% dynamic game theory is the general framework
Dynamic game theory \cite{basar1995dynamic} offers a general framework for the study of optimal decision making in stochastic and non-cooperative environments. The theory can be viewed as a marriage of game theory \cite{von1944theory}, with a focus on interactions of multiple decision making agents, 
%pioneered by Von Neumann and Morgenstern in the 1940s \cite{von1944theory} 
and optimal control theory \cite{bellman1952theory,pontryagin1959optimal},
%pioneered by Bellman and Pontryagin in the 1950s \cite{bellman1952theory,pontryagin1959optimal}.
with a focus on dynamics and feedback. The main elements of dynamic game theory are (1) a dynamical system along with a set of agents whose actions influence the state evolution of the system, (2) an objective function to be optimized associated with each agent, and (3) an information structure that specifies information sets for each agent, i.e., who knows what and when. 

% some cases of dynamic game theory have been well studied
Several special classes of dynamic games have been extensively studied. In team decision theory all agents cooperate to optimize the same objective function. Static team theory traces back to the seminal work of Marschak and Radner \cite{marschak1955elements,radner1962team,marschak1972economic}. Decentralized control theory has developed from team decision theory and control theory and introduces dynamics. The presence of dynamics makes available information depend on the actions of agents and significantly complicates the problem. Dynamic aspects were studied in important early work by Witsenhausen \cite{witsenhausen1968counterexample,witsenhausen1971separation,witsenhausen1971information} and Ho \cite{ho1972team,ho1980team}. Witsenhausen's famous counterexample \cite{witsenhausen1968counterexample} vividly demonstrated the computational difficulties associated with team decision making in dynamic and stochastic environments. This still-unsolved counterexample described a simple team decision problem in which a nonlinear strategy strictly outperforms the optimal linear strategy and established deep connections between control, communication, and information theory.
%This established connections between control, communication, and information theory and elucidated signaling incentives for controllers, in which a controllers uses its actions not only to directly optimize the objective but also to improve information of other team members.
Research on decentralized and distributed control theory has continued, and there has been a recent resurgence of interest driven by the advent of large-scale cyber-physical networks. Recent work has elaborated on connections with communication and information theory \cite{grover2013approximately,grover2015information} and focused on computational and structural issues \cite{rotkowitz2006characterization,nayyar2013decentralized,yuksel2013stochastic,swigart2014optimal,lessard2015optimal,gattami2012robust}.

% however, two-team stochastic dynamic games have not been studied
These important structural information aspects arising from cooperating agents have received much less attention in the dynamic game literature, which tends to focus only on non-cooperative and adversarial behavior. The most well studied case is the  two-player problem, which features two opposing agents who have centralized information structures and has connections to robust control \cite{bacsar2008h}. What is currently under explored  is a comprehensive study of information structure aspects when there are \emph{both} non-cooperative elements, as in general dynamic game theory, and cooperative elements, as in decentralized control theory. These aspects can be captured by a two-team stochastic dynamic game framework.

Two-team stochastic dynamic games feature two opposing teams with decentralized information structures for both the attacking and defending teams:  each agent must act based on partial information measured or received locally in a way that coordinates its actions with team members and counters against the opposing team. This framework mathematically formalizes ``red team-blue team'' scenarios that qualitatively assess network security and resilience. In comparison to decentralized control theory, a team adversarial element is added. In comparison to general dynamic game theory, a sharp contrast between cooperation with teammates and conflict against adversaries is preserved. Further, the stochastic element (modeled by a ``chance'' or ``Nature'' player in the game) allows the inclusion of random component failures and disturbance signals.

There is currently a lack of deep theoretical and computational understanding in this class of dynamic games. A static version of the problem was studied in \cite{colombino2015quadratic}. Many fundamental questions that been answered in static or single team decentralized control settings do not have counterparts in the two-team setting.

% contributions
The main contributions of the present paper are as follows. We formulate a two-team stochastic dynamic game problem and introduce a concept of mutual quadratic invariance, which defines a class of interacting information structures for the two teams under which optimal linear feedback control strategies are easy to compute. This is analogous to the concept of quadratic invariance in (single team) decentralized control \cite{rotkowitz2006characterization}. We show that for zero-sum two-team dynamic games, structured state feedback saddle point equilibrium strategies can be computed from equivalent structured disturbance feedforward saddle point equilibrium strategies. However, for nonzero-sum games we show via a counterexample that a similar equivalence fails to hold for structured Nash equilibrium strategies. Finally, we present a numerical example, which illustrates the importance of the information structure on the value of the game.

% structure
The rest of the paper is structured as follows. Section II provides preliminaries on static team games. Section III formulates a two-team stochastic dynamic game. Sections IV and V develop results on disturbance feedforward and state feedback strategies and introduce the concept of mutual quadratic invariance. Section VI presents illustrative numerical experiments, and Section VII gives some concluding remarks and future research directions. 

\section{Two-team stochastic static games}

In this section we review basic results for a two-team stochastic static game. In this setting, two teams who both know the distribution parameters of a Gaussian random vector $w$ need to decide strategies to compute vectors $u\in\R^m$ and $v\in\R^q$ as a function of the realization $w\in\R^n$ in order to minimize the expectation of \textit{different} quadratic forms in $w,u,v$. Each team is composed of multiple agents, each of which observes a different linear function of $w$ and decides a portion of the vectors $u$ or $v$. 

More formally, given a vector $w\sim\mc{N}(m_w,\Sigma_w)$, where $\Sigma_w\succ0$, consider the following game
\begin{equation}\label{eqn:team:game}
T_1: \left\{\begin{split}
\min_{\kappa_i(\cdot)} &\,\,\mathbb{E}_w\left(J_1(w,u,v)\right)\\
\text{s. t. }  &\; u_i=\kappa_i(C_iw)\\
& \forall i\in\Z_{[1,N]}
\end{split}\right., \;\;
T_2: \left\{\begin{split}
\min_{\lambda_j(\cdot)} &\,\,\mathbb{E}_w\left(J_2(w,u,v)\right)\\
\text{s. t. } &\; v_j=\lambda_j(\Gamma_iw) \\  
& \forall j\in\Z_{[1,M]}
\end{split}\right.
\end{equation}
where $J_i(w,u,v):=$
\begin{multline*}
\left[
\begin{array}{c}
 w \\
 u \\
 v  
\end{array}
\right]^\top
\left[
\begin{array}{ccc}
 \mc H_{i\;ww} &   \mc H_{i\;wu} &   \mc H_{i\;wv}  \\
   \mc H_{i\;wu}^\top &   \mc H_{i\;uu}  &   \mc H_{i\;uv}  \\
   \mc H_{i\;wv}^\top &   \mc H_{i\;uv}^\top  &    \mc H_{i\;vv} 
\end{array}
\right]
\left[
\begin{array}{c}
 w \\
 u \\
 v  
\end{array}
\right],\\ i\in\{1,2\}
\end{multline*}
are the objective functions of each team, and $\kappa_i(\cdot), i\in\Z_{[1,N]}$ and  $ \lambda_j(\cdot), j\in\Z_{[1,M]}$ are Borel measurable functions corresponding to the decision strategies of agents on team 1 and 2, respectively.

\begin{assumption}\label{ass:saddle}
We assume
$$
\left[
\begin{array}{cc}
 \mc H_{1\,uu} &  \mc H_{1\,uv}\\
 \mc H_{1\,uv}^\top&  \mc H_{2\,vv}
\end{array}
\right]\succ 0, \left[
\begin{array}{cc}
 \mc H_{1\,uu} &  \mc H_{2\,uv}\\
 \mc H_{2\,uv}^\top &  \mc H_{2\,vv}
\end{array}
\right]\succ 0.
$$
\end{assumption}
Note that in the zero-sum case $(J_1=-J_2)$ , Assumption~\ref{ass:saddle} is standard to guarantee the existence of a saddle point equilibrium to the game without decentralized information structure~\cite[condition 6.3.9]{hassibi1999indefinite}. If $J_1=J_2$, Assumption~\ref{ass:saddle} reduces to the standard positive definite assumption of team theory \cite{radner1962team}.

\vsp

We now define the set of Nash optimal strategies for the game in~\eqref{eqn:team:game}.

\vsp

\begin{definition}\label{def:nash}
A pair of strategies $(\kappa^\star(\cdot),\lambda^\star(\cdot))$ of the form $[\kappa_1^{\star\top}(C_1\,\cdot),\dots,\kappa_N^{\star\top}(C_n\,\cdot)]^\top$ and $[\lambda_1^{\star\top}(\Gamma_1\,\cdot),\dots,\lambda_M^{\star\top}(\Gamma_n\,\cdot)]^\top$ is Nash optimal for the game in~\eqref{eqn:team:game} if
\begin{equation}\label{eqn:nash}\left\{
\begin{array}{l}
\ds \kappa^\star(\cdot)\in\arg \min_{\kappa(\cdot)}\mathbb E_w J_1(w,\lambda^\star(w),\kappa(w))\\
\ds \lambda^\star(\cdot)\in\arg \min_{\lambda(\cdot)} \mathbb E_w J_2(w,\lambda(w),\kappa^\star(w)).
\end{array}
\right.
\end{equation}
\end{definition}

Under Assumption 1, the game in~\eqref{eqn:team:game} admits a unique set of linear Nash optimal strategies, which can be computed by solving a set of linear equations derived from stationarity conditions \cite{colombino2015quadratic}. This turns out to be a special case of a general multi-player, multi-objective linear quadratic static game considered in \cite{basar1978decentralized}.

\section{Two-team stochastic dynamic games}
Problem~\eqref{eqn:team:game} is a static game. There is no concept of time and causality of the information pattern. In this section we formulate a dynamic game, where two teams can influence the state evolution of a dynamical system. The agents on each team decide a portion of an input signal based on different observations of the system state over time. Decision must be causal: each player is only allowed to use past or, at most, present information. 

Our focus will be on the role of information structures for both teams in determining equilibrium strategies. Dynamic games offer a rich variety of information structures. Specific instances have been considered in the literature, with much work on various types of centralized structures \cite{basar1995dynamic} and some work on structures defined by spatiotemporal decentralization patterns. For example, a one-step-delay observation sharing pattern was shown in \cite{basar1978decentralized} to admit unique linear optimal strategies. There has been recent progress in (single team) decentralized control on information structure issues, including a characterization of information structures called quadratically invariant that yield convex control design problems \cite{rotkowitz2006characterization}. Here we seek an analogous result in a two-team game setting. 

Consider the system
\begin{equation}\label{eqn:dynamics}
\begin{split}
x(t+1) &=Ax(t)+B_1u(t)+B_2v(t)+w(t),
\end{split}
\end{equation}
where $x(t) \in\R^n$ is the system state at time $t$ with $x(0)\sim \mathcal{N}(0,\Sigma_0)$, $u(t) \in \R^{m_1}$ is the input for team 1 at time $t$, $v(t) \in \R^{m_2}$ is the input for team 2 at time $t$, and $w(t)\sim \mathcal{N}(0,\Sigma_t)$ is a random disturbance. The cost functions for each team are given by
\begin{multline}\label{eqn:cost:function}
J_i := \mathbb E \left(\sum_{t=0}^{N-1}x(t)^\top M_i(t) x(t) +  u(t)^\top R_i(t) u(t) \right .\\+ v(t)^\top V_i(t) v(t) \Bigg) + x(N)^\top M_i(N) x(N),\quad
 i\in\{1,2\},
\end{multline}
where $M_i(0)=\bold 0_{n\times n}$, $M_i(t)$ $=$ $M_i(t)^\top\in\R^{n\times n}$, $R_i(t)$ $=$ $R_i(t)^\top \in\R^{m_1\times m_1}$ and $V_i(t)$ $=$ $V_i(t)^\top \in\R^{m_2\times m_2}$.
By defining the matrices $\mc A$ $=$ $\blockdiag(A,\dots,A)$ $\in$ $\R^{n(N+1)\times n(N+1)}$,
$$
B_i = \left[
\begin{array}{ccc}
 B_i & 0  &  0 \\
 0 & \ddots  & 0  \\
 \vdots &   &   B_i\\
  0 & \cdots & 0
\end{array}
\right]\in\R^{n(N+1)\times m_jN}, \quad i\in\{1,2\},
$$
$\mc M_i$ $=$ $\blockdiag(0,M_i(1),\dots,M_i(N))$  $\in$ $\R^{n(N+1)\times n(N+1)}$, $\mc R_i$ $=$ $\blockdiag(R_i(0),\dots,R_i(N-1))$  $\in$ $\R^{m_1N\times m_1N}$ and $\mc V_i$ $=$ $\blockdiag(V_i(0),\dots,V_i(N-1))$  $\in$ $\R^{m_2N\times M_2N}$ for $i$~$\in$~$\{1,2\}$, the vectors $\bold x = (x(0),...,x(N))\in\R^{n(N+1)}$, $\bold u = (u(0),...,u(N-1))\in\R^{m_1N}$, $\bold v = (v(0),...,v(N-1))\in\R^{m_2N}$ and $\bold w = (x(0),w(0),...,w(N-1))\in\R^{n(N+1)}$, and the shift matrix 
$$
\mc Z:= \left[
\begin{array}{cccc}
0 &   & &  \\
 I & \ddots  & &  \\
  & \ddots  &  \ddots & \\
  &  & I  &  0 
  \end{array}\right]\in\R^{n(N+1)\times n(N+1)},
$$
we can write system~\eqref{eqn:dynamics} as
\begin{equation}\label{eqn:team:game:rewritten}
\begin{split}
\bold x &=\mc Z \mc A\bold x+ \mc Z  \mc B_1\bold u + \mc Z \mc B_2 \bold v+  \bold w.
\end{split}
\end{equation}
The system in~\eqref{eqn:team:game:rewritten} can be rewritten compactly as
\begin{equation}\label{eqn:team:game:rerewritten}
\bold{x}  
=
\left[
\begin{array}{ccc}
 \mc P_{11} &  \mc P_{12} &  \mc P_{13}
\end{array}
\right]
\left[
\begin{array}{c}
\bold{w}  \\
\bold{u}  \\
\bold{v}  
\end{array}
\right],
\end{equation}
where $\mc P_{11}=(I-\mc Z\mc A)^{-1} $, $\mc P_{12}=(I-\mc Z\mc A)^{-1} \mc  Z  \mc B_1$ and $\mc P_{13}=(I-\mc Z\mc A)^{-1} \mc  Z \mc  B_2$. The cost functions in~\eqref{eqn:cost:function} can be written in function of the vectorized inputs as
\begin{equation*}
\begin{split}
&\bold J_i(\bold u,\bold v)= \mathbb E_{\mathbf w}\left( 
\left[
\begin{array}{c}
\mathbf w \\
\mathbf u \\
\mathbf v  
\end{array}
\right]^\top
\mc H_i
\left[
\begin{array}{c}
\mathbf w \\
\mathbf u \\
\mathbf v  
\end{array}
\right]
\right),\quad i\in\{1,2\},
\end{split}
\end{equation*}
where
\begin{multline*}
\mc H_i =\\ \left[
\begin{array}{ccc}
\mc P_{11}^\top\mc M_i \mc  P_{11}& \mc P_{11}^\top\mc M_i\mc  P_{12} & \mc P_{11}^\top\mc M_i\mc  P_{13}  \\
 \mc P_{12}^\top\mc M_i \mc P_{11} &\mc P_{12}^\top\mc M_i\mc  P_{12} + \mc R_i &P_{12}^\top\mc M_i\mc  P_{13}  \\
 \mc P_{13}^\top\mc M_i \mc P_{11} &\mc  P_{13}^\top\mc M_i\mc  P_{12}  &\mc P_{13}^\top\mc M_i\mc  P_{13} + \mc V_i
\end{array}
\right],
\end{multline*}
for $i =\{1,2\}$.

We are interested in the finite horizon, two-team stochastic dynamic game where:
\begin {itemize}
\item Team 1 minimizes $\bold J_1(\bold u,\bold v)$ 
\item Team 2 minimizes $\bold J_2(\bold u,\bold v)$ 
\item Each team choses a structured causal state feedback strategy of the form
$$
\bold u = \mc K_1 (\bold x), \quad \bold v = \mc K_2 (\bold x). \quad \mc K_1\in\mc S_1,\;\mc  K_2\in \mc S_2,
$$
where $K_i:\R^{n(N+1)}\to\R^{m_iN}$, for $i\in\{1,2\}$ are measurable functions and $\mc S_1$ and $\mc S_2$ define an information structure for each team.
\end{itemize}

We define a information structure $\mc S_i\in\{0,1\}^{n(N+1)\times m_iN}$ as a binary matrix. $\mc K_i\in\mc S_i$ indicates that, if $[\mc S_i]_{jk}=0$, then the $j^\text{th}$ element of $\mc K_i$ is not a function of $\bold x_k$. By choosing the information structures one can enforce causality and a prescribed spatiotemporal structure on the controller strategies.

\section{Mutual Quadratic Invariance} \label{sec.dis.feedfarward}
In decentralized control with quadratically invariant information structures, the controller structure can be enforced on an affine parameter that defines the achievable set of closed-loop systems and recover a structured feedback controller. We now follow a similar approach in the two-team setting.

\subsection{Disturbance feedforward strategies}
By searching for measurable disturbance feedforward strategies of the type $\bold u=\mc Q_1(\mc P_{11}  \bold w)$ and $\bold v=\mc Q_2\mc (P_{11} \bold w)$, where $\mc Q_1\in \mc S_1$ and $\mc Q_2\in \mc S_2$, we recover the formulation of~\eqref{eqn:team:game}. Provided that Assumption~\ref{ass:saddle} is satisfied, there exists a unique Nash equilibrium of the form $\bold u=\bar {\mc Q}_1\mc P_{11} \bold w,~ \bold v= \bar{ \mc Q}_2\mc P_{11}  \bold w$ in the space of linear strategies \cite{colombino2015quadratic}. The matrices $\bar {\mc Q}_1, \bar{ \mc Q}_2$ can be easily computed by solving a linear system of equations or a sequence of semidefinite programs~\cite{colombino2015quadratic,basar1978decentralized}.
Assumption~\ref{ass:saddle} for the dynamic game problem becomes
\begin{align*}
\left[
\begin{array}{cc}
\mc P_{12}^\top\mc M_1\mc  P_{12} +\mc R_1 &\mc P_{12}^\top\mc M_1\mc  P_{13}\\
\mc P_{13}^\top\mc M_1\mc  P_{12} &  \mc P_{13}^\top\mc M_2\mc  P_{13} + \mc S_2
\end{array}
\right]\succ 0, \\
\left[
\begin{array}{cc}
\mc P_{12}^\top\mc M_2\mc  P_{12} +\mc R_2 &\mc P_{12}^\top\mc M_2\mc  P_{13}\\
\mc P_{13}^\top\mc M_2\mc  P_{12} &  \mc P_{13}^\top\mc M_1\mc  P_{13} + \mc S_1
\end{array}
\right]\succ 0.
\end{align*}
We can define new cost functions that depend on the matrices $\mc Q_1$ and $\mc Q_2$ describing linear disturbance feedforward strategies  as
\be\label{eq.cost.q}
\mc J_i\left(
\left[
\begin{array}{c}
{\mc Q}_1\\
{\mc Q}_2
\end{array}
\right]
 \right) = \bold J_i(\bold u,\bold v)\bigg|_{\bold u=\mc Q_1\mc P_{11} \bold w, \bold v=\mc Q_2\mc P_{11} \bold w}.
\ee
In particular,
\begin{multline*}
\mc J_i\left(
\left[
\begin{array}{c}
{\mc Q}_1\\
{\mc Q}_2
\end{array}
\right]
\right) = \|\mc M_i^{1\over 2} \left( I+\mc P_{12}\mc Q_1+\mc P_{13}\mc Q_2\right)\mc P_{11}\Sigma_{\bold w}^{1\over 2} \|^2_F \\
+ \| \mc R_i^{1\over 2} \mc Q_1\Sigma_{\bold w}^{1\over 2} \|^2_F 
 +  \|\mc V_i^{1\over 2} \mc Q_2 \Sigma_{\bold w}^{1\over 2}\|^2_F,
\end{multline*}
where $\Sigma_{\bold w}^{1\over 2}$ is the covariance of $\bold w$.
\subsection{Equivalent state feedback strategies}
%Disturbance feedforward strategies are not always implementable in practice as the disturbance is often not measurable directly. 
It is easy to show that there exists a bijective relationship between a pair of linear disturbance feedforward strategies ($\mc Q_1, \mc Q_2$) and an equivalent pair of linear state feedback strategies described by the matrices ($\mc K_1, \mc K_2$). More precisely, using~\eqref{eqn:team:game:rerewritten} we obtain
\begin{equation}\label{eq.equivalent.k}
\begin{split}
\left[
\begin{array}{c}
\bold {u}\\
\bold {v}
\end{array}
\right]& =\left[
\begin{array}{c}
\mc Q_1\\
\mc Q_2
\end{array}
\right]\mc P_{11}\bold w \\
& =  \left[
\begin{array}{c}
\mc Q_1\\
\mc Q_2
\end{array}
\right]\bold x - \left[
\begin{array}{c}
\mc Q_1\\
\mc Q_2
\end{array}
\right]\left[
\begin{array}{cc}
\mc P_{12} & \mc P_{13}
\end{array}
\right]\left[
\begin{array}{c}
\bold u\\
\bold v
\end{array}
\right].
\end{split}
\end{equation}
We can define the function $g$ such that
\begin{align*}
 \left(
\left[
\begin{array}{c}
{\mc K}_1\\
{\mc K}_2
\end{array}
\right]
\right)
  = g
  \left(
 \left[
\begin{array}{c}
{\mc Q}_1\\
{\mc Q}_2
\end{array}
\right]
  \right),
 \end{align*}
where
\begin{multline*}
 g\left(
\left[
\begin{array}{c}
{\mc Q}_1\\
{\mc Q}_2
\end{array}
\right]
  \right) = \left( I + \left[
\begin{array}{c}
\mc Q_1\\
\mc Q_2
\end{array}
\right] \left[
\begin{array}{cc}
\mc P_{12} & \mc P_{13}
\end{array}
\right]\right)^{-1}
 \left[
\begin{array}{c}
\mc Q_1\\
\mc Q_2
\end{array}
\right].
\end{multline*}

Using a similar approach to~\eqref{eq.equivalent.k}, one can construct the inverse mapping that, given a pair of feedback strategies, recovers the equivalent feedforward strategies.

$$
 \left[
\begin{array}{c}
{\mc Q}_1\\
{\mc Q}_2
\end{array}
\right]
 = g^{-1}\left(
\left[
\begin{array}{c}
{\mc K}_1\\
{\mc K}_2
\end{array}
\right]
\right),
$$
where the map $g^{-1}$ takes the form
\begin{multline*}
  g^{-1}\left(
\left[
\begin{array}{c}
{\mc K}_1\\
{\mc K}_2
\end{array}
\right]
  \right)=\\
\left[
\begin{array}{c}
{\mc K}_1\\
{\mc K}_2
\end{array}
\right]\left( I - \left[
\begin{array}{cc}
\mc P_{12} & \mc P_{13}
\end{array}
\right]
\left[
\begin{array}{c}
{\mc K}_1\\
{\mc K}_2
\end{array}
\right]\right)^{-1}.
\end{multline*}

\vsp

Given a pair of linear feedback strategies $\mc K_1$ and $\mc K_2$, the cost for player $i$ can be evaluated by considering the equivalent feedforward strategies as
$$
 \bold J_i(\bold u,\bold v)\bigg|_{\bold u=\mc K_1\bold x, \bold v=\mc K_2\bold x} = \mc J_i \left(g^{-1}\left(
\left[
\begin{array}{c}
{\mc K}_1\\
{\mc K}_2
\end{array}
\right]
 \right)\right),
$$
where $\mc J_i$ is defined in~\eqref{eq.cost.q}. 

\vsp

Now that we have a way to construct feedback strategies which are equivalent to any set of linear feedforward strategies, we need to establish a condition that guarantees that such equivalent feedback strategies will preserve the desired structure.

\subsection{Mutual Quadratic Invariance}

\vsp

We know form the quadratic invariance literature~\cite{rotkowitz2006characterization,swigart2010explicit} that $\mc Q_1\in \mc S_1$ and $\mc Q_2\in \mc S_2$ $\iff$ $\mc K_1\in \mc S_1$ and $\mc K_2\in \mc S_2$ if and only if  for all $
\left(\mc K_1,
\mc K_2\right )
\in \mc S_1 \times \mc S_2$
it holds that
\begin{equation}\label{eqn:mut:quad:inv:1}
\left[
\begin{array}{c}
\mc K_1\\
\mc K_2
\end{array}
\right]
\left[
\begin{array}{cc}
\mc P_{12} & \mc P_{13}
\end{array}
\right]\left[
\begin{array}{c}
\mc K_1\\
\mc K_2
\end{array}
\right]\in \mc S_1 \times \mc S_2,
\end{equation}
in other words $\mc S_1 \times \mc S_2$ is quadratically invariant under $\left[ \;\mc P_{12} \quad \mc P_{13} \; \right ]$. We define this property as \textbf{mutual quadratic invariance}. We can expand~\eqref{eqn:mut:quad:inv:1} as
\begin{equation}\label{eqn:mut:quad:inv:2}
\left[
\begin{array}{c}
\mc K_1\\
\mc K_2
\end{array}
\right]\in \mc S_1 \times \mc S_2\implies
\left\{
\begin{array}{c}
\mc K_1\mc P_{12} \mc K_1 , \quad  \mc K_1\mc P_{13} \mc K_2 \in\mc S_1\\
\mc K_2\mc P_{12} \mc K_1 , \quad  \mc K_2\mc P_{13} \mc K_2 \in\mc S_2.
\end{array}
\right.
\end{equation}

By observing~\eqref{eqn:mut:quad:inv:1} we note that MQI is equivalent to QI for a control problem where both decisions $\bold u$ and $\bold v$ are taken by a single decision maker. MQI information structures will allow us to compute equilibrium strategies in two-team games.

\section{Computing Equilibrium Strategies}
We have seen in Section~\ref{sec.dis.feedfarward} that given structures $\mc S_1$ and $\mc S_2$ which are mutually quadratically invariant under $\mc P_{12}$ and $\mc P_{13}$, one can easily obtain a Nash equilibrium in the disturbance feedforward strategies. Furthermore once we recover the equivalent state feedback strategies the structure is preserved. There is still a nontrivial question that needs to be answered.

\vsp

\begin{problem}\label{prob.nqnk}
Given a Nash equilibrium in the feedforward strategies ($\bar {\mc Q}_1, \bar{ \mc Q}_2$), are the equivalent feedback strategies 
\begin{align}\label{eqn:k_bar}
 \left(
\left[
\begin{array}{c}
\bar{\mc K}_1\\
\bar{\mc K}_2
\end{array}
\right]
\right)
  = g
  \left(
\left[
\begin{array}{c}
\bar{\mc Q}_1\\
\bar{\mc Q}_2
\end{array}
\right]
  \right),
 \end{align}
 a Nash equilibrium in the feedback strategies? 
\end{problem}

\vsp 

In order to understand why the answer to Problem~\ref{prob.nqnk}  is nontrivial, we first present a counterexample for a nonzero sum game.

\subsection{Nonzero sum game}
Consider the following problem instance with
\begin{equation}
\begin{aligned}
&N = 2, \quad A = 2, \quad B_1 = 0.4, \quad B_2 = 0.1, \\
&\Sigma_0 = 1, \quad \Sigma_t = 1 \ \forall t, \\
&M_1(t) = R_1(t) = S_1(t) = 1 \ \forall t, \\
&M_2(t) = 70 \ \forall t, R_2(t) = S_2(t) = 1 \ \forall t.
\end{aligned}
\end{equation}
This is a single state, two-stage, two-player problem, and we will consider centralized, causal strategies, which are readily verified to be mutually quadratically invariant. Using disturbance feedforward strategies, the problem can be reduced to a static game whose unique Nash Equilibrium strategies are readily computed using methods from~\cite{basar1978decentralized,colombino2015quadratic}. This yields the Nash pair $(\bar{\mc Q}_1,\bar{ \mc Q}_2)$, where

\begin{equation}
\begin{aligned}
\bar {\mc Q}_1 &= \left[\begin{array}{ccc}-0.6795 & 0 & 0 \\0.6283 & -0.4301 & 0\end{array}\right], \\
\bar{\mc Q}_2 &= \left[\begin{array}{ccc}-11.890 & 0 & 0 \\10.996 & -7.5269 & 0\end{array}\right].
\end{aligned}
\end{equation}
The corresponding equilibrium value for player 1 is $\mc J_1^*\left( \left[
\begin{array}{c}
\bar{\mc Q}_1\\
\bar{\mc Q}_2
\end{array}
\right] \right) = 220$. The corresponding state feedback strategies are
\begin{equation}
\begin{aligned}
\bar{\mc K}_1 &= \left[\begin{array}{ccc}-0.6795 & 0 & 0 \\ 0 & -0.4301 & 0\end{array}\right], \\
\bar{\mc K}_2 &= \left[\begin{array}{ccc}-11.890 & 0 & 0 \\ 0 & -7.5269 & 0\end{array}\right],
\end{aligned}
\end{equation}
However, $(\bar{\mc K}_1 ,\bar{\mc K}_2 )$ is not a Nash Equilibrium in state feedback strategies since it is readily verified that $\mc J_1\left(g^{-1}\left(
\left[
\begin{array}{c}
{ \hat{\mc K}}_1\\
\bar{ \mc K}_2
\end{array}
\right]
 \right)\right) = 206.1$, where
\begin{equation}
\hat{\mc K}_1 = \left[\begin{array}{ccc}-1.853 & 0 & 0 \\ 0 & -0.4301 & 0\end{array}\right].
\end{equation}
Although the sparsity structure is preserved between the disturbance feedforward and state feedback strategies since the information structure is mutually quadratically invariant, the Nash Equilibrium property is not preserved. Thus, for this particular non-zero sum dynamic game, the answer to the question posed in Problem 1 is negative. We show next that this situation does not occur in zero-sum games.

\subsection{Zero sum game}
Let us now consider the zero sum game case, where the objective function of one team is precisely the negative of that of the other team. Such problems can be re-written as a min-max problem of the form.

\begin{equation}\label{eq.zsg}
\begin{split}
\min_{\bold u}\max_{\bold w} \quad& \bold J(\bold u,\bold v) := \mathbb E_{\mathbf w}\left( 
\left[
\begin{array}{c}
\mathbf w \\
\mathbf u \\
\mathbf v  
\end{array}
\right]^\top
\mc H
\left[
\begin{array}{c}
\mathbf w \\
\mathbf u \\
\mathbf v  
\end{array}
\right]
\right)\end{split}
\end{equation}

As before we are interested in strategies of the form
$$
\bold u = \mc K_1 (\bold x), \quad v = \mc K_2 (\bold x). \quad \mc K_1\in\mc S_1,\;\mc  K_2\in \mc S_2,
$$
where $\mc S_1$ and $\mc S_2$ are prescribed sets of structured causal controllers. As zero-sum games are a special case of nonzero sum games, provided Assumption~\ref{ass:saddle} holds, we can find structured Nash equilibria (rather saddle point equilibrium in the zero sum context) if we consider linear strategies of the form $\bold u=\mc Q_1\mc P_{11} \bold w$ and $\bold v=\mc Q_2\mc P_{11} \bold w$.
For the zero sum game of the form~\eqref{eq.zsg}, Assumption~\ref{ass:saddle} reads
$$
\left[
\begin{array}{cc}
\mc P_{12}^\top\mc M\mc  P_{12} +\mc R &\mc P_{12}^\top\mc M\mc  P_{13}\\
\mc P_{13}^\top\mc M\mc  P_{12} &  -\left(\mc P_{13}^\top\mc M\mc  P_{13} + \mc S\right)
\end{array}
\right]\succ 0. 
$$
As before, given the saddle point equilibrium in the feedforward strategies, when $\mc S_1$ and $\mc S_2$  are mutually quadratically invariant, we can compute equivalent linear feedback strategies that preserve the structure. In the zero-sum case, however, we can relate the equilibrium property of the feedforward strategies to that of the state feedback strategies.

\vsp

We start with a result that shows that the maps $g$ and $g^{-1}$ preserve stationary points. We begin by noting that for a zero-sum game $\mc J_1=-\mc J_2$. 
\begin{lemma}\label{lem.lemma_stationarity}
Given 
\begin{align*}
 \left[
\begin{array}{c}
\bar {\mc Q}_1\\
\bar {\mc Q}_2
\end{array}
\right]
 & = g^{-1}\left(
  \left[
\begin{array}{c}
\bar{\mc K}_1\\
\bar{\mc K}_2
\end{array}
\right]
  \right)
\end{align*}
then 
\be
\begin{split}\label{thm:thesis}
\frac{\partial \mc J_1\left (
\left[
\begin{array}{c}
{\mc Q}_1\\
{\mc Q}_2
\end{array}
\right]
\right )}{\partial \mc Q_1}\bigg|_{\mc Q_1 = \bar {\mc Q}_1 ,\mc Q_2 = \bar {\mc Q}_2} & \in \mc S_1^\perp, \\
\frac{\partial \mc J_2\left(
\left[
\begin{array}{c}
{\mc Q}_1\\
{\mc Q}_2
\end{array}
\right]
\right )}{\partial \mc Q_2}\bigg|_{\mc Q_1 = \bar {\mc Q}_1 ,\mc Q_2 = \bar {\mc Q}_2} & \in \mc S_2^\perp,
\end{split}
\ee
if and only if 
\be
\begin{split}\label{thm:thesis2}
\frac{\partial \mc J_1\left (
g^{-1}\left(\left[
\begin{array}{c}
{\mc K}_1\\
{\mc K}_2
\end{array}
\right]
\right)
\right )}{\partial \mc K_1}\bigg|_{\mc K_1 = \bar {\mc K}_1 ,\mc K_2 = \bar {\mc K}_2} & \in \mc S_1^\perp, \\
\frac{\partial \mc J_2\left(
g^{-1}\left(\left[
\begin{array}{c}
{\mc K}_1\\
{\mc K}_2
\end{array}
\right]
\right)
\right )}{\partial \mc K_2}\bigg|_{\mc K_1 = \bar {\mc K}_1 ,\mc K_2 = \bar {\mc K}_2} & \in \mc S_2^\perp,
\end{split}
\ee
\end{lemma}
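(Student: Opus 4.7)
My plan is to use a chain-rule argument together with a subspace-preservation property of the Jacobian of $g^{-1}$ that follows from mutual quadratic invariance, and to exploit the zero-sum hypothesis to merge the two stationarity conditions into a single inclusion against a product subspace.

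First I would note that in the zero-sum case $\mc J_1=-\mc J_2=:\mc J$, the two conditions in~\eqref{thm:thesis} can be rewritten as $\nabla_{\mc Q_1}\mc J \in \mc S_1^\perp$ and $\nabla_{\mc Q_2}\mc J \in \mc S_2^\perp$, since each $\mc S_i^\perp$ is a linear subspace. Stacking blockwise and setting $\mc S:=\mc S_1\times\mc S_2$, these are equivalent to the single inclusion $\nabla_{\mc Q}\mc J \in \mc S^\perp$, using $(\mc S_1\times\mc S_2)^\perp=\mc S_1^\perp\times\mc S_2^\perp$ in the Frobenius inner product. The same reduction turns~\eqref{thm:thesis2} into $\nabla_{\mc K}[\mc J\circ g^{-1}](\bar{\mc K})\in\mc S^\perp$.

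Next, writing $\mc P:=\left[\mc P_{12}\;\mc P_{13}\right]$ so that $g^{-1}(\mc K)=\mc K(I-\mc P\mc K)^{-1}$, a direct perturbation computation using the push-through identity $\mc K(I-\mc P\mc K)^{-1}=(I-\mc K\mc P)^{-1}\mc K$ gives
\begin{equation*}
Dg^{-1}(\bar{\mc K})[\delta\mc K]=(I-\bar{\mc K}\mc P)^{-1}\,\delta\mc K\,(I-\mc P\bar{\mc K})^{-1}.
\end{equation*}
The key step is to show that this Jacobian sends $\mc S$ into $\mc S$. Because $\mc S$ is mutually quadratically invariant under $\mc P$, the bijection $g^{-1}\colon\mc S\to\mc S$ recorded in~\eqref{eqn:mut:quad:inv:1} applies to the affine path $\bar{\mc K}+t\delta\mc K\in\mc S$ for any $\delta\mc K\in\mc S$; differentiating the inclusion $g^{-1}(\bar{\mc K}+t\delta\mc K)\in\mc S$ at $t=0$ exhibits $Dg^{-1}(\bar{\mc K})[\delta\mc K]$ as the tangent vector of a curve contained in the linear subspace $\mc S$, hence in $\mc S$.

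A linear operator that maps a subspace into itself has a Frobenius adjoint that maps the orthogonal complement into itself, so $(Dg^{-1}(\bar{\mc K}))^{*}\colon\mc S^\perp\to\mc S^\perp$. Combined with the chain rule, this yields
\begin{equation*}
\nabla_{\mc K}[\mc J\circ g^{-1}](\bar{\mc K})=(Dg^{-1}(\bar{\mc K}))^{*}\,\nabla_{\mc Q}\mc J(\bar{\mc Q}),
\end{equation*}
so $\nabla_{\mc Q}\mc J\in\mc S^\perp$ implies $\nabla_{\mc K}[\mc J\circ g^{-1}]\in\mc S^\perp$. The converse is obtained by running the same argument with $g$ in place of $g^{-1}$ (likewise a bijection $\mc S\to\mc S$ under MQI), giving $(Dg(\bar{\mc Q}))^{*}\colon\mc S^\perp\to\mc S^\perp$ and the reverse implication.

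The main obstacle is the subspace-preservation claim for the Jacobian: it depends essentially on MQI (to obtain $g^{-1}\colon\mc S\to\mc S$) and on the linearity of $\mc S$ (to pass from the set-level bijection to its derivative by a straight-line perturbation). It is also precisely the zero-sum hypothesis that permits collapsing the two separate conditions in~\eqref{thm:thesis} into a single inclusion against the product subspace $\mc S^\perp$; in a general nonzero-sum game the chain rule for $\nabla_{\mc K_1}[\mc J_1\circ g^{-1}]$ acquires a cross term proportional to $\nabla_{\mc Q_2}\mc J_1$, which carries no $\mc S_2^\perp$ structure, and the argument breaks down—consistent with the counterexample exhibited earlier in this section.
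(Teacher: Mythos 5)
Your proposal is correct and follows essentially the same route as the paper: the zero-sum hypothesis collapses both stationarity conditions into conditions on a single objective, and mutual quadratic invariance is used to show that structured first-order perturbations are mapped to structured perturbations under $g$ and $g^{-1}$, which transfers stationarity over the structured directions between the feedforward and feedback parametrizations. The difference is only presentational: the paper argues by contradiction with an explicit first-order expansion of $g$ (invoking the quadratic-invariance closure theorems of Rotkowitz and Lall to show the perturbed direction $\tilde{\mc K}$ lies in $\mc S_1\times\mc S_2$), whereas you run the same mechanism directly through the Jacobian of $g^{-1}$, its invariance of $\mc S_1\times\mc S_2$ obtained by differentiating the set-level bijection, and the adjoint relation for Frobenius gradients.
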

\begin{proof}
Let us simplify the notation and define 
$$
  {\mc Q} :=  \left[
\begin{array}{c}
 {\mc Q}_1\\
 {\mc Q}_2
\end{array}
\right],\quad  {\mc K} :=  \left[
\begin{array}{c}
 {\mc K}_1\\
 {\mc K}_2
\end{array}
\right], \quad 
  {\mc P} :=  \left[
\begin{array}{cc}
\mc P_{12} & \mc P_{13}
\end{array}
\right].
$$
We start by proving the \emph{only if} part. 
Note that since $\mc J_1 = -\mc J_2$,
$$
\frac{\partial \mc J_2\left(
\left[
\begin{array}{c}
{\mc Q}_1\\
{\mc Q}_2
\end{array}
\right]
\right )}{\partial \mc Q_2}\bigg|_{\mc Q_1 = \bar {\mc Q}_1 ,\mc Q_2 = \bar {\mc Q}_2} \in \mc S_2^\perp
$$
if and only if
$$
\frac{\partial \mc J_1\left(
\left[
\begin{array}{c}
{\mc Q}_1\\
{\mc Q}_2
\end{array}
\right]
\right )}{\partial \mc Q_2}\bigg|_{\mc Q_1 = \bar {\mc Q}_1 ,\mc Q_2 = \bar {\mc Q}_2}  \in \mc S_2^\perp.
$$
Assume~\eqref{thm:thesis2} holds, and suppose~\eqref{thm:thesis} does not hold. Then there exist $\tilde{\mc {Q}}\in \mc S_1 \times  \mc S_2$, with $\tilde{\mc {Q}}\neq0$ such that
$$
\lim_{\varepsilon \to 0}\frac{\mc J_1(\bar{\mc Q}+\varepsilon \tilde{\mc Q})-\mc J(\bar{\mc Q})}{\varepsilon} = \kappa \neq 0,
$$
or equivalently
\be \label{eqn:g_ginv}
\lim_{\varepsilon \to 0}\frac{\mc J_1\left(g^{-1}\left(g\left(\bar{\mc Q}+\varepsilon \tilde{\mc Q}\right)\right )\right)-\mc J\left(\bar{\mc Q}\right)}{\varepsilon} = \kappa \neq 0.
\ee
We know that 
\be
\begin{split}\label{eq.variation.g}
& g\left(\bar{\mc Q}+\varepsilon \tilde{\mc Q}\right) = \left (I +\left (\bar{\mc Q}+\varepsilon \tilde{\mc Q}\right)\mc P\right  )^{-1}\left (\bar{\mc Q}+\varepsilon \tilde{\mc Q}\right) \\ 
& = \left (I +\bar{\mc Q}\mc P+\varepsilon \tilde{\mc Q}\mc P\right )^{-1}\left (\bar{\mc Q}+\varepsilon \tilde{\mc Q}\right)\\
& = \left [ (I +\bar{\mc Q}\mc P)^{-1} + \varepsilon  (I +\bar{\mc Q}\mc P)^{-1} \tilde{\mc Q} \mc P (I +\bar{\mc Q}\mc P)^{-1}\right.\\
& + \mc O(\varepsilon ^2) \Big]\left (\bar{\mc Q}+\varepsilon \tilde{\mc Q}\right)\\
& = \bar{\mc K} + \varepsilon \tilde {\mc K} + \mc O(\varepsilon ^2),
\end{split}
\ee
where
$$
\tilde {\mc K} = (I +\bar{\mc Q}\mc P)^{-1}\tilde{\mc Q} + (I +\bar{\mc Q}\mc P)^{-1} \tilde{\mc Q} \mc P (I +\bar{\mc Q}\mc P)^{-1} \bar{\mc Q}.
$$

Using~\cite[Theorem 14 + Theorem 26]{rotkowitz2006characterization}, and mutual quadratic invariance it is easy to conclude that,  $\tilde {\mc K}\in\mc S_1 \times \mc S_2$. Substituting~\eqref{eq.variation.g} in~\eqref{eqn:g_ginv} and using the fact that $\bar{\mc{Q}} = g\left (\bar{\mc K}\right )$ one obtains
\begin{equation*} \label{eqn:g_g_K}
\begin{split}
&\lim_{\varepsilon \to 0}\frac{\mc J_1\left(g^{-1}\left( \bar{\mc K} + \varepsilon \tilde {\mc K} + \mc O(\varepsilon ^2)\right )\right)-\mc J\left( g^{-1}\left (\bar{\mc K}\right )\right)}{\varepsilon} = \\ 
&\lim_{\varepsilon \to 0}\frac{\mc J_1\left(g^{-1}\left( \bar{\mc K} + \varepsilon \tilde {\mc K} \right )\right)-\mc J\left( g^{-1}\left (\bar{\mc K}\right )\right)}{\varepsilon}
 = \kappa \neq 0,
\end{split}
\end{equation*}
which, since $\tilde {\mc K}\in\mc S_1 \times \mc S_2$, is in contradiction with~\eqref{thm:thesis2} and thus proves the claim.

The converse direction can be proven analogously. 
\end{proof}

\vsp

Note that Lemma~\ref{lem.lemma_stationarity} only holds for zero-sum games as the proof heavily relies to the fact that $\mc J_1 = -\mc J_2$. We are now ready to state our main result, which allows us to construct a saddle point Equilibrium in the space of linear feedback strategies.

\vsp

\begin{theorem}\label{thm.nash.equivalent}
Let ($\bar {\mc Q}_1,\bar {\mc Q}_2$)  be the unique saddle point equilibrium in the disturbance feedforward strategies. Then if there exists a saddle point equilibrium in state feedback strategies, it is unique and given by
\begin{align*}
 \left[
\begin{array}{c}
\bar {\mc K}_1\\
\bar {\mc K}_2
\end{array}
\right]
 & = g\left(
  \left[
\begin{array}{c}
\bar{\mc Q}_1\\
\bar{\mc Q}_2
\end{array}
\right]
  \right).
\end{align*}
\end{theorem}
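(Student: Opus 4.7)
The plan is to reduce the state feedback saddle point question to the disturbance feedforward saddle point question via the bijection $g$ and Lemma~\ref{lem.lemma_stationarity}, exploiting the fact that in a zero-sum game under Assumption~\ref{ass:saddle} the feedforward saddle point equilibrium is unique (as recalled in Section~\ref{sec.dis.feedfarward} and established in~\cite{colombino2015quadratic,basar1978decentralized}). Concretely, I would show that any structured linear state feedback saddle point maps, via $g^{-1}$, to a structured disturbance feedforward saddle point, and then use uniqueness on the feedforward side to conclude uniqueness on the feedback side.

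First I would fix a purported saddle point $(\bar{\mc K}_1,\bar{\mc K}_2)\in\mc S_1\times\mc S_2$ in state feedback strategies. Because $\bar{\mc K}_1$ is a best response to $\bar{\mc K}_2$ on the affine set $\mc S_1$ and the composite cost $\mc J_1\circ g^{-1}$ is smooth in $\mc K_1$, the partial gradient with respect to $\mc K_1$ must be orthogonal to $\mc S_1$, and similarly for $\mc K_2$; i.e., the pair satisfies the stationarity conditions~\eqref{thm:thesis2}. Applying Lemma~\ref{lem.lemma_stationarity}, the equivalent feedforward pair $(\bar{\mc Q}_1,\bar{\mc Q}_2)=g^{-1}(\bar{\mc K}_1,\bar{\mc K}_2)$ satisfies the feedforward stationarity conditions~\eqref{thm:thesis}, which for the zero-sum problem under Assumption~\ref{ass:saddle} characterize saddle point equilibria in the linear feedforward strategies (since $\mc J_1=-\mc J_2$ is strictly convex in $\mc Q_1$ and strictly concave in $\mc Q_2$ on the corresponding structured subspaces).

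Next I would invoke the uniqueness of the structured linear saddle point for the feedforward game, which forces $(\bar{\mc Q}_1,\bar{\mc Q}_2)$ to coincide with the unique pair obtained from the linear system of stationarity equations. Because $g$ is a well-defined bijection between structured linear feedforward strategies and structured linear feedback strategies (this is exactly where mutual quadratic invariance enters, via the preservation arguments already used in Lemma~\ref{lem.lemma_stationarity}), this identifies $(\bar{\mc K}_1,\bar{\mc K}_2)=g(\bar{\mc Q}_1,\bar{\mc Q}_2)$, giving simultaneously the form of the saddle point and its uniqueness.

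The main obstacle I anticipate is the justification that stationarity is necessary for the state feedback saddle point: the map $g^{-1}$ introduces a matrix inverse, so one must confirm differentiability of $\mc J_i\circ g^{-1}$ at $(\bar{\mc K}_1,\bar{\mc K}_2)$ (equivalently, invertibility of $I-\mc P\,\bar{\mc K}$), and that a best response on the structured subspace $\mc S_i$ is indeed a stationary point in the sense of~\eqref{thm:thesis2}. The former follows from the fact that $(\bar{\mc K}_1,\bar{\mc K}_2)$ corresponds to realizable closed-loop dynamics and hence $g^{-1}$ is defined at it; the latter follows from differentiating the composite cost along arbitrary variations in $\mc S_i$. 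Once these technical points are in place, the argument closes by combining Lemma~\ref{lem.lemma_stationarity} with the uniqueness of the feedforward saddle point.
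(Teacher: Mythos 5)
Your proposal is correct and follows essentially the same route as the paper's proof: map an arbitrary structured state feedback saddle point through $g^{-1}$, invoke Lemma~\ref{lem.lemma_stationarity} to transfer stationarity to the feedforward strategies, use the convex--concave structure under Assumption~\ref{ass:saddle} to conclude the stationary point is the unique pair $(\bar{\mc Q}_1,\bar{\mc Q}_2)$, and then use bijectivity of $g$ to identify and uniquely determine the feedback saddle point. Your added remarks on differentiability and well-posedness of $g^{-1}$ are technical refinements of the same argument, not a different approach.
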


\begin{proof}

Let $(\hat {\mc K}_1,\hat {\mc K}_2)$ be any saddle point equilibrium in state feedback linear strategies. Let $(\hat {\mc Q}_1,\hat {\mc Q}_2)$ be the corresponding disturbance feedforward policy. Since $(\hat {\mc K}_1,\hat {\mc K}_2)$ is stationary, so is $(\hat {\mc Q}_1,\hat {\mc Q}_2)$ by Lemma 1. Since $\mathbf J$ is convex quadratic in $ {\mc Q}_1$ and concave quadratic in $ {\mc Q}_2$, it follows from Assumption 1 that the stationary point is unique. Thus, $(\hat {\mc Q}_1,\hat {\mc Q}_2) = (\bar {\mc Q}_1,\bar {\mc Q}_2)$. Since $g$ is bijective, $(\hat {\mc K}_1,\hat {\mc K}_2) = (\bar {\mc K}_1,\bar {\mc K}_2)$. Thus, $(\bar {\mc K}_1,\bar {\mc K}_2)$ is the unique saddle point equilibrium in state feedback linear strategies due to the uniqueness of ($\bar {\mc Q}_1,\bar {\mc Q}_2$).
\end{proof}

This result allows computation of structured equilibrium feedback strategies in two-team stochastic dynamic games with mutually quadratically invariant information structures. 

\section{Numerical example}

We now present a very simple illustrative numerical example. The mutual quadratic invariance property allows us to compute equilibrium strategies and values under these information structures and thereby to understand the importance of different information structures in dynamical games. We consider a two player system depicted in Figure~\ref{fig:int}, which can be interpreted as a simple transportation network. 

\begin{figure}[htp!]
\begin{center}
\begin{tikzpicture}[>=stealth',shorten >=1pt, node distance=1.8cm, on grid, initial/.style={}]

%\node (0) {};
 \node[state,fill=RoyalBlue]          (1)                        {$\textcolor{white}{x_1}$};
 \node[state,fill=Maroon]          (2) [right =of 1]    {$\textcolor{white}{x_2}$};
 \node[] (3) [right =of 2]    {};
 \node[] (4) [above =of 1] {};
 \node[] (5) [above =of 2] {};

\path
 (1)     edge [->, color = ProcessBlue, double=ProcessBlue]  node [midway, above] {$u$}  (2)
 (2)     edge [->, color = red, double  = red]  node [midway, above] {$v$}  (3)
 (4)     edge [->, color = OliveGreen, double  = OliveGreen]  node [midway, left] {$w_1$}  (1)
 (5)     edge [->, color = OliveGreen, double  = OliveGreen]  node [midway, left] {$w_2$}  (2);

\end{tikzpicture} 
\caption{Two player network}
\label{fig:int}
\end{center}
\end{figure}

\vsp

Each subsystem consists of a buffer with single integrator dynamics. System 1 stores $x_1$ and can control input $u$ that transfers some of the good stored in its buffer to System 2. System 2 stores $x_2$ and can control input $v$ to discard some of the good. Both systems are affected by random disturbances which are normally distributed with zero mean and unit variance. The dynamics of the system is

\begin{multline}\label{eq.game.dyn}
\left[
\begin{array}{c}
{x_1^+} \\
{x_2^+} 
\end{array}
\right] = 
\left[
\begin{array}{cc}
1 & 0 \\
0 & 1
\end{array}
\right] 
\left[
\begin{array}{c}
{x_1} \\
{x_2} 
\end{array}
\right] + \\
\left[
\begin{array}{cc}
-1 \\
1
\end{array}
\right] {u} +
\left[
\begin{array}{cc}
0 \\
-1
\end{array}
\right] {v} 
+
\left[
\begin{array}{ccc}
{w_1} \\
{w_2 } 
\end{array}
\right] 
\end{multline}

Given the dynamics in~\eqref{eq.game.dyn} with $w_1(t),w_2(t), x(0)\sim\mc N(0,1)$ we consider the following zero sum dynamic game

\begin{equation}\label{eq.game.example}
\begin{array}{rl}
\ds \min_{{\bold u}} \max_{{\bold v}} & \ds\sum_{t=1}^{10} 2 \, {\mathbb E  x_1^2(t)} + {\mathbb E  u^2(t-1)}  + \\
&\quad \quad - \,  {\mathbb E  x_2^2(t)} -2\,{\mathbb E  v^2(t-1)} \\
\\
\ds \textnormal{s. t.} & \bold u = \mc K_1 \bold x,\quad \mc K_1\in\mc S_1 \\
\ds 				    & \bold v = \mc K_2 \bold x,\quad \mc K_2\in\mc S_2, 		    
\end{array}
\end{equation}
where $\bold x = (x(0),...,x(10))$, $\bold u = (u(0),...,u(9))$, $\bold v = (v(0),...,v(9))$. Both players benefit from keeping the variance of their state and input low and from increasing the variance of the opponent's state and input. We will compare the results for different information structures $\mc S_1$ and $\mc S_2$, all of which are mutually quadratically invariant. In particular, we consider:

\begin{itemize}
\item Causal controllers with full information (FI). That is both players have access to all past and present information.
$$
 {\mc S_1 = \mc S_2 =  \left[
\begin{array}{ccccccccccccccccc}
\star & \star & \\ 
\star & \star &\star & \star &\\
\star & \star &\star & \star &\star & \star &\\
\star & \star &\star & \star &\star & \star &\star & \star &\\
\star & \star &\star & \star &\star & \star &\star & \star &\star & \star & \\
&&&&&\vdots \\
\end{array}
\right]. }
$$

\item One step delay information sharing (1SDIS). At time $t$ both players do not know the opponent's current state but have full information up to time $t-1$. 
$$
 {\mc S_1 =   \left[
\begin{array}{ccccccccccccccccc}
\star & 0 & \\ 
\star & \star & \star & 0 &\\
\star & \star &\star & \star &\star & 0 &\\
\star & \star &\star & \star &\star & \star &\star & 0 &\\
\star & \star &\star & \star &\star & \star &\star & \star &\star & 0 &   \\
&&&&&\vdots \\
\end{array}
\right], }
$$
$$
 {\mc S_2 =   \left[
\begin{array}{ccccccccccccccccc}
0 & \star & \\ 
\star & \star & 0 & \star &\\
\star & \star &\star & \star &0 & \star &\\
\star & \star &\star & \star &\star & \star &0 & \star &\\
\star & \star &\star & \star &\star & \star &\star & \star &0 & \star &   \\
&&&&&\vdots \\
\end{array}
\right]. }
$$

\item Decentralized control for Player 1 (DP1). Player 1 only has access to present and past information about its own state, Player 2 has full information.

$$
 {\mc S_1 =   \left[
\begin{array}{ccccccccccccccccc}
\star & 0 & \\ 
\star & 0 & \star & 0 &\\
\star & 0 &\star & 0 &\star & 0 &\\
\star & 0 &\star & 0 &\star & 0 &\star & 0 &\\
\star & 0 &\star & 0 &\star & 0 &\star & 0 &\star & 0 &   \\
&&&&&\vdots \\
\end{array}
\right], }
$$

and $\mc S_2$ has full information.
\end{itemize}

It is easily verifiable that all such structures respect the mutual quadratic invariance assumption for the given system. We computed the Nash equilibrium feedforward strategies ($\bar {\mc Q}_1,\bar {\mc Q}_2$) for the three different information structures using the method proposed in~\cite{colombino2015quadratic}. We applied Theorem~\ref{thm.nash.equivalent} to compute the linear saddle point equilibrium in the state feedback strategies as
\begin{align*}
 \left[
\begin{array}{c}
\bar {\mc K}_1\\
\bar {\mc K}_2
\end{array}
\right]
 & = g\left(
  \left[
\begin{array}{c}
\bar{\mc Q}_1\\
\bar{\mc Q}_2
\end{array}
\right]
  \right).
\end{align*}
In Table~\ref{table:Table} we observe the different costs at equilibrium. Note the large difference in cost function that is achieved for different information structures. Using the full information structure as a baseline, as we expect, Player 1 is penalized by using decentralized information (DP1). On the other hand, Player 1 obtains a great advantage with the one step delay information sharing (1SDIS) structure. 
\begin{table}
\caption{The cost at the Nash Equilibrium for the three different information structures. A smaller cost is indicates an advantage for Player 1 who is minimizing in~\eqref{eq.game.example}, while a larger cost is an advantage for Player 2.}
\label{table:Table}
\begin{center}
\begin{tabular}{ @{} c|c@{} } 
\bottomrule
\multicolumn{2}{>{\columncolor[gray]{.95}}c}{Equilibrium cost for the different information structures} \\
\toprule

Structure & Equilibrium Cost~\eqref{eq.game.example}   \\
\toprule 
FI  &   -1.58    \\%\vspace{0.1cm}\\
1SDIS   & -10.02   \\%\vspace{0.1cm} \\
DP1     &   0.00 \\
\toprule
\bottomrule
\end{tabular}
\end{center}
\end{table}

Figure~\ref{fig:sym} shows the breakdown of the cost function of~\eqref{eq.game.example} for the different information structures and allows us to understand why certain information structures are more beneficial for different players. For example, if we consider 1SDIS we notice that Player 1 can exploit the fact that its opponent has no information on Player 1's current state and input and it uses this to dramatically increase the variance of $x_2$. To do so Player 1 needs to `spend' some variance in $u$, which is also increased. The net gain, however, is clearly in favor of Player 1. 

\begin{figure}[htp!]
\begin{center}
\begin{tikzpicture}
\begin{axis}[
    ybar,
    enlargelimits=0.15,
    legend style={at={(0.5,-0.15)},
      anchor=north,legend columns=-1},
    symbolic x coords={FI,1SDIS,DP1},
    xtick=data
    ]
\addplot[draw=none,mark=none,fill=RoyalBlue] coordinates {(FI,21.9596) (1SDIS,22.0122) (DP1,20.5012)};
\addplot[draw=none,mark=none,fill=ProcessBlue] coordinates {(FI,8.2812) (1SDIS,11.8580) (DP1,7.7873)};
\addplot[draw=none,mark=none,fill=Maroon] coordinates {(FI,18.3332) (1SDIS,30.3471) (DP1,16.4917)};
\addplot[draw=none,mark=none,fill=red] coordinates {(FI, 13.5872) (1SDIS,13.5168) (DP1, 11.8868)};

\legend{$2\sum\mathbb E x_1(t)^2$,$\sum\mathbb E u(t)^2$,$\sum\mathbb E x_2(t)^2$,$2\sum\mathbb E v(t)^2$}
\end{axis}
\end{tikzpicture}
\caption{Breakdown of the cost function of~\eqref{eq.game.example} for the three information structures}
\label{fig:sym}
\end{center}
\end{figure}

\section{Conclusion}
% Summary
We have considered a two-team linear quadratic stochastic dynamic game with decentralized information structures for both teams. We introduced the concept of Mutual Quadratic Invariance (MQI), which defines a class of interacting team information structures for which equilibrium strategies can be easily computed. We demonstrated an equivalence of disturbance feedforward and state feedback saddle point equilibrium strategies that facilitates this computation in zero-sum games, and we showed such an equivalence fails to hold for Nash equilibrium strategies in nonzero-sum games. A numerical example showed how mutually quadratically invariant information structures can be evaluated and how different structures can lead to significantly different equilibrium values.

% Outlook
Many fundamental questions remain open in two-team stochastic dynamic games. For example, issues involving infinite horizon and boundedness of the equilibrium value (stability), separation and certainty equivalence, games with incomplete model information, and design of information structures can be considered. Some of these results may take inspiration from recent progress on information structure issues in decentralized control.

%%%%%%%%%%%%%%%%%%%%%%%%%%%%%%%%%%%%%%%%%%%%%%%%%%%%%%%%%%%%%%%%%%%%%%%%%%%
\bibliography{bib_file}
\bibliographystyle{ieeetr}

\end{document}